\newtheorem{theorem}{Theorem}[section]
\newtheorem{proposition}[theorem]{Proposition}
\DeclareMathOperator\arctanh{arctanh}
\newtheorem*{theorem*}{Theorem}
\newtheorem*{lemma*}{Lemma}
\newtheorem*{remark*}{Remark}
\newtheorem*{definition*}{Definition}
\newtheorem*{proposition*}{Proposition}
\newtheorem*{corollary*}{Corollary}
\numberwithin{equation}{section}
\newcommand{\real}{\mathbb{R}}
\let\ced=\c         
\def\qed{\,\unskip\kern 6pt \penalty 500
\raise -2pt\hbox{\vrule \vbox to8pt{\hrule width 6pt
\vfill\hrule}\vrule}\par}
\definecolor{darkblue}{rgb}{0.05, .05, .65}
\definecolor{darkgreen}{rgb}{0.1, .65, .1}
\definecolor{darkred}{rgb}{0.8,0,0}
\newcommand{\beqn}{\begin{equation}}
\newcommand{\eeqn}{\end{equation}}
\newcommand{\bear}{\begin{eqnarray}}
\newcommand{\eear}{\end{eqnarray}}
\newcommand{\bean}{\begin{eqnarray*}}
\newcommand{\eean}{\end{eqnarray*}}
\begin{document}

\title{\huge \bf Qualitative properties of solutions to a generalized Fisher-KPP equation}

\author{
\Large Razvan Gabriel Iagar\,\footnote{Departamento de Matem\'{a}tica
Aplicada, Ciencia e Ingenieria de los Materiales y Tecnologia
Electr\'onica, Universidad Rey Juan Carlos, M\'{o}stoles,
28933, Madrid, Spain, \textit{e-mail:} razvan.iagar@urjc.es},\\
[4pt] \Large Ariel S\'{a}nchez,\footnote{Departamento de Matem\'{a}tica
Aplicada, Ciencia e Ingenieria de los Materiales y Tecnologia
Electr\'onica, Universidad Rey Juan Carlos, M\'{o}stoles,
28933, Madrid, Spain, \textit{e-mail:} ariel.sanchez@urjc.es}\\
[4pt] }
\date{}
\maketitle

\begin{abstract}
The following Fisher-KPP type equation
$$
u_t=Ku_{xx}-Bu^q+Au^p, \quad (x,t)\in\real\times(0,\infty),
$$
with $p>q>0$ and $A$, $B$, $K$ positive coefficients, is considered. For both $p>q>1$ and $p>1$, $q=1$, we construct stationary solutions, establish their behavior as $|x|\to\infty$ and prove that they are separatrices between solutions decreasing to zero in infinite time and solutions presenting blow-up in finite time. We also establish decay rates for the solutions that decay to zero as $t\to\infty$.
\end{abstract}

\

\noindent {\bf Mathematics Subject Classification 2020:} 35A17, 35B30, 35B44, 35K57, 35Q92.

\smallskip

\noindent {\bf Keywords and phrases:} Fisher-KPP equation, stationary solutions, separatrix, decay rates, finite time blow-up.

\section{Introduction}

The aim of this paper is to bring into light some qualitative properties related to the dynamics of solutions to the following generalized Fisher-KPP model:
\begin{equation}\label{eq0}
u_t=Ku_{xx}-Bu^q+Au^p,
\end{equation}
posed for $(x,t)\in\real\times(0,\infty)$, in the generic range of exponents $p>q>0$, $p>1$ and with $A$, $B$, $K$ positive coefficients. The main feature of Eq. \eqref{eq0} is the competition between three terms having different effects for large times: a source term with exponent $p>1$ which, when alone, usually leads to finite time blow-up (see for example \cite{QS} for a thorough study of this phenomenon), an absorption term with exponent $q>0$ which, when alone, implies a dissipation, that is, a loss of the $L^1$ norm (leading also to a finite time extinction if $q<1$), and a diffusion term which is conservative with respect to the $L^1$ norm. Thus, the balance between reaction and absorption will determine the main features related to the large time behavior of the solutions. In order to simplify the model, we can perform the following scaling
\begin{equation}\label{resc1}
x=a\overline{x}, \quad t=b\overline{t}, \quad u=c\overline{u},
\end{equation}
with coefficients
\begin{equation}\label{resc2}
c=\left(\frac{B}{A}\right)^{1/(p-q)}, \quad a=\left(\frac{Kc^{1-p}}{A}\right)^{1/2}, \quad b=\frac{c^{1-p}}{A},
\end{equation}
in order to obtain that, in the new variables $(\overline{x},\overline{t})$, the rescaled function $\overline{u}$ is a solution to
\begin{equation}\label{eq1}
\overline{u}_{\overline{t}}=(\overline{u})_{\overline{x}\overline{x}}-\overline{u}^q+\overline{u}^p.
\end{equation}
We shall thus work, without any loss of generality, with Eq. \eqref{eq1}, and for simplicity we drop from the notation the overlines, that is, we relabel the solution and the variables in Eq. \eqref{eq1} as $(u,x,t)$.  Eq. \eqref{eq1} is a very well-established model when the reaction and absorption exponents satisfy the opposite order, that is, $0<p<q$, stemming from the seminal works by Fisher \cite{Fi37} and Kolmogorov, Petrovsky and Piscounoff \cite{KPP37}, dealing with the specific case $p=1$ and $q=2$ as a model in mathematical biology. Nowadays, the Fisher-KPP equation with $0<p<q$ is rather well understood, new applications have been proposed and, from the mathematical point of view, it has been noticed that the dynamics of the solutions is well represented by solutions in form of traveling waves with a speed $c>0$, that is
\begin{equation}\label{TW}
u(x,t)=f(x-ct), \quad c>0,
\end{equation}
where $f(\cdot)$ is the profile of the wave, see for example \cite{AW75, GK, SHB05, Ma10, EGS13, DQZ20} and references therein. In particular, a very famous result is the existence of a critical speed $c^*>0$ such that traveling waves only exist if the speed is $c\geq c^*$ and do not exist when $c<c^*$.

\medskip

The opposite case of ordering between the reaction and absorption exponents, that is, $p>q>0$, has been also proposed, more recently, in a number of models arising in applied sciences. As a general model, Eq. \eqref{eq1} appears in growth and diffusion models as established, for example, in the book \cite{Ba94}. More specific applications in the mathematical modeling of tumor growth have been proposed by authors such as Marusic and Bajzer and their collaborators, see for example \cite{MB93, MBVF94, BMV96} and references therein, and a similar simplified model in \cite{VS84}.

Despite these applications, we discovered that there are still some gaps in the mathematical study of Eq. \eqref{eq1}. This is probably due to the fact that, in stark contrast to the more ``standard" Fisher-KPP equation, that is, with exponents ordered as $q>p$, in our case it appears that traveling waves (and other explicit or semi-explicit solutions, as we shall see in the present work) are unstable with respect to the dynamics of general solutions to Eq. \eqref{eq1}; that is, even if an initial condition $u_0\in C(\real)$ is sufficiently close (with respect to the $L^{\infty}$ norm) to one of several explicit solutions, it will evolve either by growing up (and then blowing up in finite time) or by decaying as $t\to\infty$. We mention here \cite[Chapter 17]{QS} where solutions to Eq. \eqref{eq1} with $q=1$ are studied and the short note \cite{HBS14} devoted to the range $p>1>q>0$ and $m>1$, where the transition from finite time extinction to blow-up is established. The existence of a separatrix in the form of a stationary solution has been thoroughly investigated for the supercritical semilinear heat equation in \cite{BF15, GNW92, GNW01, W93} (see also references therein), while some more general equations related to Eq. \eqref{eq1}, or particular cases of it, have been considered in \cite{Ha24, Zhang24}.

Putting into light this instability of several stationary solutions is actually the core of this work; indeed, we construct several solutions to Eq. \eqref{eq1} (in either explicit or implicit form) and we then prove that they serve as separatrices for the dynamics of general solutions to the Cauchy problem associated to Eq. \eqref{eq1} with suitable initial conditions.  More precisely, solutions with data lying \emph{above} the specific solution (even very close to it) \emph{blow up in finite time}, while solutions with data lying \emph{below} the specific solution \emph{decay as $t\to\infty$} (and even \emph{vanish in finite time} if $0<q<1$, as shown in \cite{HBS14}) and we give some decay estimates. To fix the notation, we consider throughout this work the following family of initial conditions:
\begin{equation}\label{ic}
u(x,0)=u_0(x), \quad x\in\real, \quad u_0\in L^{\infty}(\real)\cap C(\real).
\end{equation}
Since we are dealing with a semilinear problem, solutions will be taken in classical sense. We say that $u$ is a \emph{subsolution} (respectively \emph{supersolution}) to Eq. \eqref{eq1} if the sign of equality is replaced with $\leq$ (respectively $\geq$) in Eq. \eqref{eq1}. Thus, our main tool in the forthcoming proofs is the \emph{comparison principle}, which is a well established property of Eq. \eqref{eq1} and even of much more general models of analogous type, see for example \cite{DK12, DJ23}.

Another motivation for writing this paper is that, in a forthcoming work, we are able to map by some transformations a rather general family of a priori more complex partial differential equations into various cases of Eq. \eqref{eq1}, and we did not find a proper reference in literature giving the precise information we need on Eq. \eqref{eq1}. We thus decided to fill in this gap, at the same time the current work serving for completing the study of the ranges of $q$ not considered in the short note \cite{HBS14} devoted to absorption exponents $q\in(0,1)$.

Concerning the organization of the material, the paper is divided into two sections, related to, by order of appearance, the ranges $q=1$ and $q\in(1,p)$, followed by a short discussion at the end. The two main sections are further divided into subsections with the following scheme: a first subsection establishing some exact solutions (either explicit or implicit), and then one or two subsections establishing, on the one hand, the decay rate of solutions lying below the constant solution and, on the other hand, the character of separatrix of the stationary solution established in the first subsection in any of these cases. The \emph{main difficulty} stems from the fact that, for data $u_0$ which are very close to the stationary solution, in any of the cases, a rather fine construction of subsolutions and supersolutions is required. We are now in a position to begin our analysis.

\section{The exponent $q=1$}\label{sec.equal}

As indicated in the title, throughout this section, we fix $q=1$ in Eq. \eqref{eq1}, that is,
\begin{equation}\label{eq1.bis}
u_t=u_{xx}-u+u^p.
\end{equation}
We follow the plan mentioned at the end of the Introduction.

\subsection{Some explicit solutions}\label{subsec.expl.equal}

We derive below some exact solutions to Eq. \eqref{eq1.bis}, all of them in explicit form.

$\bullet$ \textbf{constant solution:} it is obvious that $u(x,t)=1$ is the unique non-trivial constant solution.

$\bullet$ \textbf{solutions depending only on time:} we look for solutions in the form $u(x,t)=h(t)$. With this ansatz, Eq. \eqref{eq1.bis} becomes the differential equation
$$
h'(t)=-h(t)+h(t)^p, \quad t>0,
$$
which can be integrated explicitly, leading to the family of explicit solutions
\begin{equation}\label{sol.time}
U(x,t;C)=h(t;C)=\left[1+Ce^{(p-1)t}\right]^{-1/(p-1)}, \quad C\in\real.
\end{equation}
Let us notice here that the behavior of $U(\cdot,\cdot;C)$ as $t\to\infty$ depends on the sign of $C$. Indeed, it is obvious that $U$ decreases as $t\to\infty$ if $C>0$, while it blows up in finite time if $C<0$, noticing that we recover the constant solution $U\equiv 1$ for $C=0$.

$\bullet$ \textbf{stationary solutions:} we look for solutions in the form $u(x,t)=g(x)$. With this ansatz, Eq. \eqref{eq1.bis} becomes the differential equation
\begin{equation}\label{interm1}
g''(x)-g(x)+g(x)^p=0, \quad x\in\real.
\end{equation}
By multiplying \eqref{interm1} by $g'(x)$ and integrating, we find that
\begin{equation}\label{interm2}
(g')^2(x)-g^2(x)+\frac{2}{p+1}g^{p+1}(x)=K, \quad K\in\real.
\end{equation}
Since we would like to work with solutions decaying to zero as $|x|\to\infty$, we let $K=0$ in \eqref{interm2}. After some easy manipulations and an implicit integration, we obtain
\begin{equation*}
C\pm x=-\frac{2}{p-1}\arctanh\sqrt{1-\frac{2g(x)^{p-1}}{p+1}},
\end{equation*}
which can be written in an explicit form as follows:
\begin{equation}\label{stat.sol.equal}
g(x;C)=\left\{\frac{p+1}{2}\left[1-\tanh^2\left(C+\frac{p-1}{2}x\right)\right]\right\}^{1/(p-1)}, \quad C\in\real,
\end{equation}
an expression similar to the ones identified at the end of \cite[Section 2]{HBS14}. Recalling the identity
$$
1-\tanh^2\theta=\frac{4}{2+e^{2\theta}+e^{-2\theta}}, \quad \theta\in\real,
$$
we deduce from \eqref{stat.sol.equal} that the stationary solutions $g(\cdot;C)$ have an exponential decay as $|x|\to\infty$, more precisely
$$
g(x;C)\sim K(C,p)e^{-|x|}, \quad K(C,p):=\left(\frac{p+1}{2}\right)^{1/(p-1)}e^{2C/(p-1)}, \quad C\in\real.
$$
In particular, the stationary solution belongs to $L^1(\real)$. In the forthcoming analysis, we will let for simplicity $C=0$ and we will work with the even stationary solution $g(\cdot;0)$, but, apart from longer and more tedious calculations, the separatrix property of every stationary solution $g(\cdot;C)$ will follow in a completely analogous way.

\subsection{Decay rate and large time behavior below the constant solution}\label{subsec.decay.equal}

The aim of this section is to show that solutions starting from data which are smaller than one decay as $t\to\infty$ and stabilize towards a profile linked with the heat equation.
\begin{theorem}\label{th.decay.equal}
Let $u_0\in C(\real)\cap L^{\infty}(\real)$ be such that $0<\|u_0\|_{\infty}<1$. Then there exists $C>0$ such that the solution $u$ to the Cauchy problem \eqref{eq1.bis}-\eqref{ic} satisfies
\begin{equation}\label{decay.equal}
\|u(t)\|_{\infty}\leq Ce^{-t}, \quad (x,t)\in\real\times(0,\infty).
\end{equation}
If furthermore $u_0\in L^1(\real)$, we have the following large time behavior for the solution $u$ to the Cauchy problem \eqref{eq1.bis}-\eqref{ic}
\begin{equation}\label{asympt.equal}
\lim\limits_{t\to\infty}t^{1/2}\|e^tu(t)-G(t)\|_{\infty}=0,
\end{equation}
where $G(t)$ is the heat kernel
$$
G(x,t)=\frac{M}{\sqrt{4\pi t}}e^{-|x|^2/4t}, \quad M=\|u_0\|_{1}.
$$
\end{theorem}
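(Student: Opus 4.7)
For the first assertion \eqref{decay.equal}, the plan is a direct invocation of the comparison principle against the explicit family of time-dependent solutions $U(\cdot,\cdot;C)$ constructed in \eqref{sol.time}. Pick $C > 0$ such that $(1+C)^{-1/(p-1)} \geq \|u_0\|_\infty$; then (assuming $u_0 \geq 0$, as required by the power nonlinearity) $0 \leq u_0(x) \leq U(x,0;C)$ for every $x \in \real$, and the comparison principle yields
\begin{equation*}
0 \leq u(x,t) \leq U(x,t;C) = \bigl[1+Ce^{(p-1)t}\bigr]^{-1/(p-1)} \leq C^{-1/(p-1)}\, e^{-t},
\end{equation*}
which is exactly \eqref{decay.equal}.

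For the asymptotic \eqref{asympt.equal}, the plan is to pass to the rescaled unknown $v(x,t) := e^t u(x,t)$, which a direct computation shows satisfies the perturbed heat equation
\begin{equation*}
v_t = v_{xx} + e^{-(p-1)t} v^p, \qquad v(\cdot,0)=u_0,
\end{equation*}
with an exponentially decaying nonlinear self-interaction. The decay \eqref{decay.equal} gives $\|v(t)\|_\infty \leq C_0$ uniformly in $t$, and a Gronwall argument applied to the inequality $\frac{d}{dt}\|v(t)\|_1 \leq C_0^{p-1} e^{-(p-1)t} \|v(t)\|_1$ (which is obtained by integrating the $v$-equation in $x$ and invoking $u_0 \in L^1$) yields a uniform bound $\|v(t)\|_1 \leq \widetilde{M}$. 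Duhamel's formula then reads
\begin{equation*}
v(t) = e^{t\Delta} u_0 + \int_0^t e^{(t-s)\Delta}\bigl[e^{-(p-1)s} v^p(s)\bigr]\ds,
\end{equation*}
where, since $u_0 \in L^1$, classical heat-kernel asymptotics give $t^{1/2}\|e^{t\Delta} u_0 - G(t)\|_\infty \to 0$ with the heat kernel $G$ carrying mass $\|u_0\|_1$.

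The main obstacle is controlling the nonlinear remainder at rate $o(t^{-1/2})$ in $L^\infty$. A first-pass estimate splits the time integral at $s = t/2$: on $[t/2,t]$, the weight $e^{-(p-1)s}$ combined with the uniform $L^\infty$ bound on $v^p$ and the $L^\infty$-contractivity of the heat semigroup gives an exponentially small contribution; on $[0,t/2]$, the semigroup bound $\|e^{\sigma\Delta} f\|_\infty \leq C\sigma^{-1/2}\|f\|_1$ together with the uniform $L^1$ bound $\|v^p(s)\|_1 \leq C_0^{p-1}\widetilde{M}$ gives a contribution of size $O(t^{-1/2})$. Upgrading this crude rate to $o(t^{-1/2})$ is the delicate step, and the plan is a bootstrap: once the $O(t^{-1/2})$ remainder bound and the linear-heat decay are combined, one obtains $\|v(t)\|_\infty = O(t^{-1/2})$ for large $t$, and by $L^\infty$–$L^1$ interpolation $\|v^p(s)\|_1 \leq C s^{-(p-1)/2}$. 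Reinjecting this sharper decay into the Duhamel integral and combining a tail-splitting argument with dominated convergence then delivers the required $o(t^{-1/2})$ rate, while the identification of the limit profile with $G(t)$ follows by tracking $\int v(x,t)\dx$ directly from the $v$-equation.
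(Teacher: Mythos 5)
Your proof of the decay estimate \eqref{decay.equal} is correct and is exactly the paper's argument: comparison with the explicit time-dependent solutions \eqref{sol.time}. For the asymptotics \eqref{asympt.equal} you take a genuinely different route: the paper rescales to $w=e^tu$, notes \eqref{interm4}, and then invokes the Galaktionov--V\'azquez stability theorem together with comparison with the heat flow to identify the $\omega$-limit, whereas you propose a self-contained Duhamel argument with $L^1$--$L^\infty$ heat-semigroup estimates. Your preliminary steps (uniform $L^\infty$ bound, Gronwall-type $L^1$ bound, splitting of the Duhamel integral at $s=t/2$, and the resulting $O(t^{-1/2})$ bound on the nonlinear remainder) are sound, modulo the routine justification that the spatial integral of $v_{xx}$ vanishes (or, more simply, use the $L^1$-contractivity of $e^{\sigma\Delta}$ in the mild formulation).

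The final step, however, contains a genuine gap: the nonlinear remainder cannot be upgraded to $o(t^{-1/2})$, by any bootstrap. For nonnegative data the strong maximum principle gives $v>0$ for $t>0$, so $I_\infty:=\int_0^\infty e^{-(p-1)s}\|v^p(s)\|_{1}\,ds\in(0,\infty)$, and the mass of the Duhamel term $\int_0^t e^{(t-s)\Delta}\bigl[e^{-(p-1)s}v^p(s)\bigr]\,ds$ converges to $I_\infty>0$; in fact, multiplying by $t^{1/2}$, this term converges in $L^\infty$ to the Gaussian profile carrying mass $I_\infty$, so it is \emph{exactly} of order $t^{-1/2}$. Reinjecting the sharper bound $\|v^p(s)\|_1\lesssim s^{-(p-1)/2}$ does not help: the time integral was already convergent because of the weight $e^{-(p-1)s}$, and the obstruction is that its limit is a positive constant, not that it is large. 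Carried out correctly, your scheme therefore proves $t^{1/2}\|e^tu(t)-\widetilde G(t)\|_\infty\to0$ where $\widetilde G$ is the Gaussian with the \emph{final} mass $M_\infty=\|u_0\|_1+I_\infty=\lim_{t\to\infty}\|e^tu(t)\|_1$ --- which is precisely what your own closing remark about tracking $\int v(x,t)\dx$ computes --- rather than with $M=\|u_0\|_1$. This is in tension with \eqref{asympt.equal} as stated, and also with the paper's identification of the limit (via the stability theorem and uniqueness for the heat equation with datum $u_0$), a point the authors may wish to re-examine; but as a proof of the statement as written, your argument does not close.
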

\begin{proof}
Assume first that $u_0\in L^{\infty}(\real)\cap C(\real)$ and $\|u_0\|_{\infty}\in(0,1)$. Pick $C_0>0$ such that
$$
\|u_0\|_{\infty}<(1+C_0)^{-1/(p-1)}.
$$
Recalling the solution $u(\cdot,\cdot;C_0)$ defined in \eqref{sol.time}, the comparison principle then entails that
\begin{equation}\label{interm22}
u(x,t)\leq U(x,t;C_0)=\left[1+C_0e^{(p-1)t}\right]^{-1/(p-1)}\leq C_0^{-1/(p-1)}e^{-t},
\end{equation}
for any $(x,t)\in\real\times(0,\infty)$, and thus we have proved the estimate \eqref{decay.equal}.

Set next
$$
w(x,t):=e^tu(x,t), \quad (x,t)\in\real\times(0,\infty).
$$
Notice that $w(x,0)=u(x,0)=u_0(x)$, for any $x\in\real$, and a straightforward calculation shows that $w$ is a solution to the following equation
\begin{equation}\label{interm4}
w_t=w_{xx}+e^{-(p-1)t}w^p, \quad (x,t)\in\real\times(0,\infty).
\end{equation}
Assume now that, moreover, $u_0\in L^1(\real)$. The equation \eqref{interm4} strongly suggests that an asymptotic simplification to the heat equation as $t\to\infty$ is expected to take place. In order to prove it in a rigorous way, the simplest path is to apply the stability theorem by Galaktionov and V\'azquez \cite{GV91, GV03}. Indeed, if we let $v$ to be the solution to the standard heat equation $v_t=v_{xx}$ with the same initial condition $v(x,0)=u_0(x)$, $x\in\real$, we easily observe that $w$ is a supersolution to this problem, so that the comparison principle applied to the heat equation, together with \eqref{interm22}, ensure that
\begin{equation}\label{interm5}
v(x,t)\leq w(x,t)\leq C_0^{-1/(p-1)}, \quad (x,t)\in\real\times(0,\infty).
\end{equation}
The estimates \eqref{interm5}, together with well known properties of the heat equation, readily imply that the hypothesis required for the application of the above mentioned stability theorem are in force. The stability theorem thus gives that the $\omega$-limit set of the orbits $w(\cdot;t)$ as $t\to\infty$ are contained in the solutions to the standard heat equation. However, since the initial condition is $u_0$, the uniqueness of the solution to the Cauchy problem for the heat equation implies that $w(t)$ approaches $v(t)$ as $t\to\infty$. Since $u_0\in L^{1}(\real)$, the well known asymptotic behavior as $t\to\infty$ for the integrable solutions to the heat equation leads to the convergence \eqref{asympt.equal}.
\end{proof}

\noindent \textbf{Remark.} We can observe by comparison with a suitable Gaussian function, which is a subsolution to \eqref{interm4}, that, even if $u_0\in L^{\infty}(\real)$ but $u_0\not\in L^1(\real)$, we have
$$
Ct^{-1/2}e^{-t}\leq \|u(t)\|_{\infty}, \quad t>0.
$$

\subsection{The stationary solution as a separatrix}\label{subsec.stat.equal}

As commented in Section \ref{subsec.expl.equal}, we fix for simplicity $C=0$ and denote by $g_0(x)=g(x;0)$, $x\in\real$, the stationary solution defined in \eqref{stat.sol.equal} with $C=0$. In this section, we prove that this solution (and, analogously or by a simple translation, any other stationary solution $g(\cdot;C)$) plays the role of a separatrix for the large time behavior of the solutions to the Cauchy problem \eqref{eq1.bis}-\eqref{ic}: that is, an initial condition strictly above it produces a solution whose $L^{\infty}$ norm increases with time (and in the end blows up in finite time), while an initial condition strictly below it produces a solution decaying in time, for which the outcome of Theorem \ref{th.decay.equal} applies. This is made precise in the next statement.
\begin{theorem}\label{th.separ.equal}
(a) Let $u_0\in L^{\infty}(\real)\cap C(\real)$ such that
\begin{equation}\label{inf.upper}
\inf\limits_{x\in\real}\frac{u_0(x)}{g_0(x)}=\kappa_0>1.
\end{equation}
Then the solution $u$ to the Cauchy problem \eqref{eq1.bis}-\eqref{ic} with initial condition $u_0$ blows up in finite time.

(b) Let $u_0\in L^{\infty}(\real)\cap C(\real)$ such that
\begin{equation}\label{sup.lower}
\sup\limits_{x\in\real}\frac{u_0(x)}{g_0(x)}=\kappa^0<1.
\end{equation}
Then the solution $u$ to the Cauchy problem \eqref{eq1.bis}-\eqref{ic} with initial condition $u_0$ decays to zero as $t\to\infty$ and behaves as in Theorem \ref{th.decay.equal}.
\end{theorem}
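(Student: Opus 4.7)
The plan is to handle the two halves of the theorem by starting, in each case, from the pointwise comparison with $\kappa g_0$ --- a strict subsolution for $\kappa > 1$, a strict supersolution for $\kappa < 1$, as one sees from
$$
(\kappa g_0)_t - (\kappa g_0)_{xx} + \kappa g_0 - (\kappa g_0)^p = (\kappa - \kappa^p)\,g_0^p
$$
and the stationary equation --- and then adding a second quantitative ingredient: a Kaplan-type integral functional for part (a), and a linear Schr\"odinger-type subsolution together with a spectral argument for part (b).

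For part (a), hypothesis \eqref{inf.upper} and the comparison principle first give $u(\cdot,t) \geq \kappa_0 g_0$ on the existence interval. I would then study
$$
W(t) := \int_\real u(x,t)\,g_0(x)\,dx,
$$
which is finite since $g_0 \in L^1(\real)$ and $u$ stays locally bounded. Integration by parts using $g_0''=g_0-g_0^p$ collapses $W'(t)$ into $\int g_0\,u\,(u^{p-1} - g_0^{p-1})\,dx$. From $u \geq \kappa_0 g_0$ one extracts a linear lower bound that makes $W$ strictly increasing and unbounded, and, by applying Jensen's inequality with the probability measure $g_0\,dx/\|g_0\|_1$ to the integral $\int u^p g_0\,dx$ and bounding $g_0^{p-1}(x)\leq g_0(0)^{p-1}=(p+1)/2$ in the integral $\int u g_0^p\,dx$, the nonlinear estimate
$$
W'(t) \geq \|g_0\|_1^{1-p}\,W(t)^p - \tfrac{p+1}{2}\,W(t).
$$
Once $W$ is sufficiently large the first term dominates, giving $W' \geq c\,W^p$ and forcing finite-time blow-up of $W$, hence of $\|u(t)\|_\infty \geq W(t)/\|g_0\|_1$.

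For part (b), comparison with $\kappa^0 g_0$ gives $0 \leq u \leq \kappa^0 g_0$. Inserting this bound into the nonlinearity as $u^p = u\cdot u^{p-1} \leq (\kappa^0)^{p-1} g_0^{p-1}\,u$ shows that $u$ is a subsolution of the linear equation
$$
\tilde u_t = A_{\kappa^0}\,\tilde u, \qquad A_{\kappa^0} := \partial_x^2 - 1 + (\kappa^0)^{p-1} g_0^{p-1},
$$
and comparing with the solution $\tilde u$ issuing from $\tilde u(\cdot,0) = \kappa^0 g_0 \in L^2(\real)$ one gets $u \leq \tilde u$. The central step is then to force $\tilde u(t) \to 0$ in $L^\infty$. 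The operator $A_{\kappa^0}$ is self-adjoint with essential spectrum $(-\infty,-1]$ (since $g_0^{p-1}\to 0$ at infinity), and a direct check gives $A_1 g_0 = g_0''-g_0+g_0^{p-1}\cdot g_0 = 0$, so $0$ is the principal eigenvalue of $A_1$ with positive eigenfunction $g_0$. If $0$ were in the spectrum of $A_{\kappa^0}$, the essential-spectrum gap would make it an eigenvalue with positive eigenfunction $\psi$; then $A_1 \psi = (1-(\kappa^0)^{p-1})\,g_0^{p-1}\psi$, and pairing against $g_0$ via self-adjointness produces the contradiction
$$
0 = \int_\real \psi\,A_1 g_0\,dx = \int_\real g_0\,A_1\psi\,dx = (1-(\kappa^0)^{p-1})\int_\real g_0^p\,\psi\,dx > 0.
$$
So $\sup \sigma(A_{\kappa^0}) < 0$, whence $\|\tilde u(t)\|_2$ decays exponentially, and standard $L^2$-to-$L^\infty$ parabolic smoothing upgrades this to $L^\infty$ decay of $\tilde u$, and so of $u$. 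Choosing $t_0$ with $\|u(t_0)\|_\infty < 1$ and applying Theorem \ref{th.decay.equal} to the Cauchy problem restarted at time $t_0$ yields both the rate \eqref{decay.equal} and, when $u_0 \in L^1(\real)$, the asymptotic behavior \eqref{asympt.equal}.

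The main obstacle is the decay step in part (b): it is not enough that $u$ remains trapped under $\kappa^0 g_0$; one must also force $u$ to $0$. The argument above works because $g_0$ is precisely the ground state of $A_1$, which rules $0$ out of the spectrum of $A_{\kappa^0}$ as soon as $\kappa^0 < 1$ and supplies the spectral gap that drives the decay.
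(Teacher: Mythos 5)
Your proposal is correct, but it proves the theorem by a genuinely different route than the paper. For part (a), the paper constructs a spreading subsolution $G(x,t)=(t+T)^{\delta}g_0((t+T)^{\gamma}x)$ and then invokes the energy functional together with \cite[Theorem 17.6]{QS} to conclude blow-up; your Kaplan-type argument with the weight $g_0$ is more self-contained and exploits very efficiently the fact that $g_0$ solves the stationary equation, so that the linear terms cancel exactly in $W'(t)=\int_{\real} g_0\,u\,(u^{p-1}-g_0^{p-1})\,dx$. In fact you can shortcut your two-step argument: from $u\geq\kappa_0 g_0$ one gets $u^{p-1}-g_0^{p-1}\geq(1-\kappa_0^{1-p})u^{p-1}$, hence directly $W'\geq(1-\kappa_0^{1-p})\|g_0\|_1^{1-p}W^p$ by Jensen, which blows up with no threshold discussion needed; the only point to justify is the integration by parts (harmless, since $g_0,g_0'$ decay exponentially and $u,u_x$ are bounded on compact time intervals). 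For part (b), the paper again works with an explicit shrinking supersolution $(t+T)^{-\delta}g_0((t+T)^{-\gamma}x)$, while you linearize above the solution and use a spectral gap for $A_{\kappa^0}$; this is heavier machinery (Weyl's theorem, ground-state positivity, $L^2\to L^\infty$ smoothing) but it isolates the structural reason $g_0$ is a separatrix, namely that $g_0$ is the ground state of $A_1$ with eigenvalue $0$. One step you should make explicit there: the mere fact that $0$ would be an isolated point of $\sigma(A_{\kappa^0})$ does not by itself give a \emph{positive} eigenfunction; you must first observe that $\sup\sigma(A_{\kappa^0})\leq\sup\sigma(A_1)=0$ by monotonicity of the quadratic form in the potential, so that $0$, if in the spectrum, is necessarily the top and hence the principal eigenvalue. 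You also correctly identified that $u\leq\kappa^0 g_0$ alone does not give $\|u(t)\|_\infty<1$ (since $\|g_0\|_\infty>1$), which is exactly the obstacle the paper's supersolution is designed to overcome. Finally, both your argument and the paper's implicitly use $u_0\geq 0$; worth stating.
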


\noindent \textbf{Remark.} Before going to the proof, let us observe that conditions \eqref{inf.upper} and \eqref{sup.lower} are in fact related to a separation of the tails as $|x|\to\infty$. Indeed, due to the strong maximum principle, another solution cannot touch from above or from below the stationary solution $g_0$ at a time $t>0$, and for a fixed compact subset $K\subset\real$ one could really find $\kappa_0(K)$, respectively $\kappa^0(K)$ (depending in this case on the compact $K$) so that the previous conditions hold true in $K$. It is thus as $|x|\to\infty$ where a separation has to be required, as it does not follow from a maximum principle, and this is the sense of the two conditions \eqref{inf.upper} and \eqref{sup.lower}.

\begin{proof}[Proof of Theorem \ref{th.separ.equal}]
(a) Let us consider the function
$$
G(x,t)=(t+T)^{\delta}g_0((t+T)^{\gamma}x),
$$
where $T>1$, $\delta>0$ and $\gamma>0$ are parameters to be determined later, in order for $G$ to be a subsolution to the Cauchy problem \eqref{eq1.bis}-\eqref{ic}. A direct calculations gives
$$
G_t(x,t)=\delta(t+T)^{\delta-1}g_0(\zeta)+\gamma(t+T)^{\delta-1}\zeta g_0'(\zeta), \quad \zeta:=(t+T)^{\gamma}x.
$$
Since $g_0$ is an even function with a decreasing profile with respect to $x>0$, we deduce that
\begin{equation}\label{interm6}
G_t(x,t)\leq\delta(t+T)^{\delta-1}g_0(\zeta).
\end{equation}
Moreover, employing the equation \eqref{interm1} satisfied by $g_0$, we find that
\begin{equation}\label{interm7}
G_{xx}(x,t)=(t+T)^{\delta+2\gamma}g_0''(\zeta)=(t+T)^{\delta+2\gamma}(g_0(\zeta)-g_0^p(\zeta)).
\end{equation}
Gathering the outcome of \eqref{interm6} and \eqref{interm7}, we obtain
\begin{equation*}
\begin{split}
G_t(x,t)-G_{xx}(x,t)&+G(x,t)-G^p(x,t)\leq (t+T)^{\delta-1}\left[\delta+T+t-(t+T)^{2\gamma+1}\right]g_0(\zeta)\\
&+(t+T)^{\delta+2\gamma}\left[1-(t+T)^{\delta(p-1)-2\gamma}\right]g_0^p(\zeta).
\end{split}
\end{equation*}
We next choose the (up to now) free parameters as follows: fix first
\begin{equation}\label{deltagamma.equal}
0<\delta<\kappa_0^{(p-1)/2}-1, \quad \gamma=\frac{\delta(p-1)}{4}.
\end{equation}
With the previous choices, we are in a position to also choose
$$
T:=\kappa_0^{1/\delta}>1.
$$
Let us first observe that the terms in brackets in the previous calculation are negative with these choices, for any $t\geq0$. On the one hand, since $\delta(p-1)-2\gamma>0$, we immediately get that
$$
1-(t+T)^{\delta(p-1)-2\gamma}\leq 1-T^{\delta(p-1)-2\gamma}<0, \quad t\geq0.
$$
On the other hand, since $\gamma>0$, we have
\begin{equation*}
(T+t)^{2\gamma+1}-(t+T)=(t+T)((T+t)^{2\gamma}-1)\geq T(T^{2\gamma}-1)>\kappa_0^{(p-1)/2}-1>\delta,
\end{equation*}
whence
$$
\delta+T+t-(t+T)^{2\gamma+1}<0, \quad t\geq0.
$$
We have thus proved that $G(\cdot,t)$ is a subsolution to Eq. \eqref{eq1.bis} for any $t\geq0$. Moreover,
$$
G(x,0)=T^{\delta}g_0(T^{\gamma}x)=\kappa_0g_0(T^{\gamma}x)\leq u_0(T^{\gamma}x),
$$
hence $G$ is a subsolution to the Cauchy problem \eqref{eq1.bis}-\eqref{ic} with initial condition $u_0(T^{\gamma}x)$. The comparison principle then gives
$$
u((t+T)^{\gamma}x,t)\geq (t+T)^{\delta}g_0((t+T)^{\gamma}x), \quad (x,t)\in\real\times(0,\infty),
$$
or equivalently, in the new variable $\zeta$,
\begin{equation}\label{interm8}
u(\zeta,t)\geq (t+T)^{\delta}g_0(\zeta), \quad (\zeta,t)\in\real\times(0,\infty).
\end{equation}
Defining then the following energy
$$
E(u(t))=\frac{1}{2}\int_{\real}(|u_x|^2(x,t)+u^2(x,t))\,dx-\frac{1}{p+1}\int_{\real}u^{p+1}(x,t)\,dx,
$$
we observe that
\begin{equation*}
\begin{split}
E(G(t))&=\frac{1}{2}\int_{\real}\left[(T+t)^{2(\delta+\gamma)}|g_0'(x(T+t)^{\gamma})|^2\,dx+(T+t)^{2\delta}g_0(x(T+t)^{\gamma})^2\right]\,dx\\
&-\frac{(T+t)^{\delta(p+1)}}{p+1}\int_{\real}g_0^{p+1}(x(T+t)^{\gamma})\,dx\\
&=\frac{1}{2}(T+t)^{2\delta+\gamma}\int_{\real}|g_0'(y)|^2\,dy+\frac{1}{2}(T+t)^{2\delta-\gamma}\int_{\real}g_0^2(y)\,dy\\
&-\frac{1}{p+1}(T+t)^{(p+1)\delta-\gamma}\int_{\real}g_0^{p+1}\,dy<0,
\end{split}
\end{equation*}
provided $t>0$ is taken sufficiently large, since the choice of $\delta$ and $\gamma$ in \eqref{deltagamma.equal} implies
$$
(p+1)\delta-\gamma>2\delta+\gamma>2\delta-\gamma.
$$
We then infer from \cite[Theorem 17.6]{QS} that the solution to Eq. \eqref{eq1.bis} with initial condition $G(t_0)$ for $t_0$ sufficiently large such that $E(G(t_0))<0$ blows up in finite time. It follows by comparison that also $u$ blows up in finite time, as claimed.

\medskip

(b) Working ``in the mirror" with respect to part (a), let us consider the function
$$
H(x,t)=(t+T)^{-\delta}g_0((t+T)^{-\gamma}x),
$$
where $T>1$, $\delta>0$ and $\gamma>0$ are parameters to be determined later, in order for $H$ to be a supersolution to the Cauchy problem \eqref{eq1.bis}-\eqref{ic}. A direct calculations gives
$$
H_t(x,t)=-\delta(t+T)^{-\delta-1}g_0(\zeta)-\gamma(t+T)^{-\delta-1}\zeta g_0'(\zeta), \quad \zeta:=(t+T)^{-\gamma}x.
$$
Since $g_0$ is an even function with a decreasing profile with respect to $x>0$, we deduce that
\begin{equation}\label{interm6bis}
H_t(x,t)\geq-\delta(t+T)^{-\delta-1}g_0(\zeta).
\end{equation}
Moreover, employing the equation \eqref{interm1} satisfied by $g_0$, we find that
\begin{equation}\label{interm7bis}
H_{xx}(x,t)=(t+T)^{-\delta-2\gamma}g_0''(\zeta)=(t+T)^{-\delta-2\gamma}(g_0(\zeta)-g_0^p(\zeta)).
\end{equation}
Gathering the outcome of \eqref{interm6bis} and \eqref{interm7bis}, we obtain
\begin{equation*}
\begin{split}
H_t(x,t)-H_{xx}(x,t)&+H(x,t)-H^p(x,t)\geq (t+T)^{-\delta-1}\left[-\delta+T+t-(t+T)^{1-2\gamma}\right]g_0(\zeta)\\
&+(t+T)^{-\delta-2\gamma}\left[1-(t+T)^{-\delta(p-1)+2\gamma}\right]g_0^p(\zeta).
\end{split}
\end{equation*}
Proceeding as in part (a), we next choose the parameters as follows: fix first
\begin{equation}\label{deltagamma.equal.bis}
0<\delta<1-(\kappa^0)^{p-1}, \quad \gamma=\frac{\delta(p-1)}{2}.
\end{equation}
With the previous choices, we are in a position to also choose
$$
T:=(\kappa^0)^{-1/\delta}>1.
$$
Notice that $T^{-\delta}=\kappa^0$, whence
$$
H(x,0)=T^{-\delta}g_0(\zeta)=\kappa^0g_0(\zeta)\geq u_0(\zeta).
$$
Moreover, since $2\gamma=\delta(p-1)$ by \eqref{deltagamma.equal.bis}, we have
$$
1-(t+T)^{-\delta(p-1)+2\gamma}=0,
$$
for any $t\geq0$, while the first term in brackets is also non-negative, since
$$
T+t-(T+t)^{1-2\gamma}-\delta=(T+t)\left[1-(T+t)^{-2\gamma}\right]-\delta\geq 1-T^{-2\gamma}-\delta=1-(\kappa^0)^{(p-1)}-\delta>0,
$$
according to \eqref{deltagamma.equal.bis}. We infer that $H$ is a supersolution to the Cauchy problem \eqref{eq1.bis}-\eqref{ic} and thus, by comparison,
\begin{equation}\label{interm8bis}
u(\zeta,t)\leq (t+T)^{-\delta}g_0(\zeta), \quad (\zeta,t)\in\real\times(0,\infty).
\end{equation}
We then find from \eqref{interm8bis} that there is $t_0>0$ sufficiently large such that
$$
\|u(t_0)\|_{\infty}\leq (t_0+T)^{-\delta}\|g_0\|_{\infty}<1,
$$
and an application of Theorem \ref{th.decay.equal} starting with $t=t_0$ as initial time completes the proof.
\end{proof}

\section{The range $1<q<p$}\label{sec.upper}

Throughout this section, we work with absorption exponents $q\in(1,p)$. This choice does no longer allow for explicit integrations of the differential equations giving rise to solutions depending only on time or on the space variable, in contrast to the calculations in Section \ref{subsec.expl.equal}. Because of this technical problem, the forthcoming analysis is more involved than the previous one. We follow the same program as in Section \ref{sec.equal}.

\subsection{Some special solutions in implicit form}\label{subsec.expl.upper}

We examine below the properties of solutions either depending only on time or only on space, to Eq. \eqref{eq1}. We have:

$\bullet$ \textbf{constant solution:} once more, $u(x,t)\equiv1$ is a constant solution to Eq. \eqref{eq1}.

$\bullet$ \textbf{solutions depending only on time:} we look for solutions of the form $u(x,t)=h(t)$, $t>0$. With this ansatz, Eq. \eqref{eq1} becomes
\begin{equation}\label{interm9}
h'(t)=h^p(t)-h^q(t).
\end{equation}
It is easy to observe that $h(0)>1$ implies $h'(t)>0$ for any $t>0$, while $0<h(0)<1$ implies $h'(t)<0$ for any $t>0$. We next give a more precise description of the properties of solutions to \eqref{interm9}. Assume first that $h(0)>1$, hence $h(t)>h(0)$ for any $t>0$ and thus
$$
h^p(t)>h'(t)=h^p(t)(1-h^{q-p}(t))>h^p(t)(1-h(0)^{q-p}).
$$
The second inequality already implies finite time blow-up of $h$, and let us denote by $T\in(0,\infty)$ its blow-up time. We have
$$
1-h(0)^{q-p}<h^{-p}(t)h'(t)<1
$$
and a straightforward argument of integration on $(t,T)$ leads to the blow-up rate
\begin{equation}\label{blowup.rate.upper}
(p-1)^{-1/(p-1)}(T-t)^{-1/(p-1)}<h(t)<[(p-1)(1-h(0)^{q-p})]^{-1/(p-1)}(T-t)^{-1/(p-1)}.
\end{equation}
Assume now that $h(0)<1$. Then we get from \eqref{interm9} that
$$
-h^q(t)<h'(t)<(h(0)^{p-q}-1)h^q(t)<0,
$$
or equivalently
$$
(1-q)(h(0)^{p-q}-1)<(h^{1-q})'(t)<-(1-q)
$$
and by integration on $(0,t)$ and straightforward manipulations, we deduce the decay rate of the solution $h$ as $t\to\infty$ as follows:
\begin{equation}\label{decay.rate.upper}
[(q-1)t+h(0)^{1-q}]^{-1/(q-1)}<h(t)<[(q-1)(1-h(0)^{p-q})t+h(0)^{1-q}]^{-1/(q-1)},
\end{equation}
which means in particular that $h(t)$ decays like $t^{-1/(q-1)}$ as $t\to\infty$.

$\bullet$ \textbf{stationary solutions:} we look for solutions of the form $u(x,t)=\psi(x)$, $x\in\real$. With this ansatz, Eq. \eqref{eq1} becomes
$$
\psi''(x)-\psi^q(x)+\psi^p(x)=0,
$$
which, after a multiplication by $\psi'(x)$ and an obvious integration term by term (setting the integration constant to zero), reads
\begin{equation}\label{interm10}
(\psi')^2(x)=\frac{2}{q+1}\psi^{q+1}(x)-\frac{2}{p+1}\psi^{p+1}(x).
\end{equation}
Notice first that, at a maximum point (which we assume to be at $x=0$, as we want an even solution for simplicity), we obtain from \eqref{interm10}
\begin{equation}\label{interm11}
\psi(0)=\|\psi\|_{\infty}=\left(\frac{p+1}{q+1}\right)^{1/(p-q)}>1.
\end{equation}
The standard existence and uniqueness theorem applied to the Cauchy problem \eqref{interm10} (taking the square root with a minus sign in front) with initial condition \eqref{interm11} gives the existence of a stationary solution for $x>0$, which can be then extended by symmetry to $x<0$, having its maximum at $x=0$ and a decreasing profile, as desired. From now on, we denote this stationary solution by $\psi_0$. Its asymptotic properties are stated in the following result.
\begin{proposition}\label{prop.stat}
The following behavior of $\psi_0$ as $|x|\to\infty$ holds true:
\begin{equation}\label{limits}
\lim\limits_{|x|\to\infty}|x|^{2/(q-1)}\psi_0(x)=\left[\frac{2}{q-1}\sqrt{\frac{q+1}{2}}\right]^{2/(q-1)}, \quad \lim\limits_{|x|\to\infty}\frac{|x|\psi_0'(x)}{\psi_0(x)}=-\frac{2}{q-1}.
\end{equation}
\end{proposition}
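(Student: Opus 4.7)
The plan is to work directly with the first-order energy identity \eqref{interm10}. Since $\psi_0$ is even and strictly decreasing for $x>0$ (starting from its maximum $\psi_0(0)$ with zero slope), on this half-line one has the ODE
\begin{equation*}
\psi_0'(x) = -\sqrt{\tfrac{2}{q+1}\psi_0^{q+1}(x) - \tfrac{2}{p+1}\psi_0^{p+1}(x)} = -\sqrt{\tfrac{2}{q+1}}\,\psi_0^{(q+1)/2}(x)\sqrt{1 - \tfrac{q+1}{p+1}\psi_0^{p-q}(x)}.
\end{equation*}
The first preliminary step is to verify that $\psi_0(x)\to 0$ as $x\to\infty$. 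Because $\psi_0$ is positive and decreasing, the limit $L:=\lim_{x\to\infty}\psi_0(x)\in[0,\psi_0(0))$ exists. The polynomial $F(\psi):=\tfrac{2}{q+1}\psi^{q+1}-\tfrac{2}{p+1}\psi^{p+1}$ has exactly two zeros in $[0,\psi_0(0)]$, namely $0$ and $\psi_0(0)$ (the latter by the very choice \eqref{interm11}), and is strictly positive between them. If $L>0$ then $(\psi_0')^2\to F(L)>0$ with $\psi_0'<0$, which would force $\psi_0(x)\to-\infty$, a contradiction. Hence $L=0$.

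The next and main step is to linearize the asymptotic ODE by the substitution
\begin{equation*}
\phi(x):=\psi_0(x)^{(1-q)/2},\qquad x>0.
\end{equation*}
A direct computation using the formula for $\psi_0'$ above turns the nonlinear equation into
\begin{equation*}
\phi'(x) = \tfrac{q-1}{2}\sqrt{\tfrac{2}{q+1}}\sqrt{1 - \tfrac{q+1}{p+1}\psi_0(x)^{p-q}},
\end{equation*}
and since $\psi_0(x)\to 0$ and $p>q$, the right-hand side converges as $x\to\infty$ to the positive constant $\tfrac{q-1}{2}\sqrt{\tfrac{2}{q+1}}$. A standard elementary argument (integrate the inequalities $c-\varepsilon<\phi'(x)<c+\varepsilon$ between a threshold $x_0$ and $x$, divide by $x$, and let $\varepsilon\downarrow 0$) then yields $\phi(x)/x \to \tfrac{q-1}{2}\sqrt{\tfrac{2}{q+1}}$ as $x\to\infty$. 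Undoing the substitution and raising both sides to the power $2/(1-q)<0$ produces exactly the first limit claimed in \eqref{limits}.

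The second limit in \eqref{limits} follows as a corollary. Indeed, factoring $\psi_0$ out of the ODE for $\psi_0'$ one gets
\begin{equation*}
\frac{x\,\psi_0'(x)}{\psi_0(x)} = -x\,\psi_0(x)^{(q-1)/2}\sqrt{\tfrac{2}{q+1} - \tfrac{2}{p+1}\psi_0(x)^{p-q}},
\end{equation*}
and combining the asymptotics $\psi_0(x)^{(q-1)/2}=1/\phi(x)\sim \tfrac{2}{(q-1)x}\sqrt{\tfrac{q+1}{2}}$ already established with the convergence of the square root to $\sqrt{2/(q+1)}$ gives the limit $-2/(q-1)$ after the square roots cancel. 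The symmetric statement for $x\to-\infty$ is automatic since $\psi_0$ is even.

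I expect the only subtle point to be the first step, that $\psi_0\to 0$ at infinity: this relies on monotonicity plus the structure of the zero set of $F$ on $[0,\psi_0(0)]$, and cannot be read off from the energy identity alone. Once this is in hand, the substitution $\phi=\psi_0^{(1-q)/2}$ is exactly the right linearization for the absorption-dominated tail (where the reaction term $\psi_0^{p-q}$ is negligible), and the rest is routine bookkeeping of exponents.
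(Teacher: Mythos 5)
Your proof is correct, but it takes a genuinely different (and more elementary) route than the paper. The paper first derives two-sided differential inequalities $-C_1\psi_0^{(q+1)/2}<\psi_0'<-C_2\psi_0^{(q+1)/2}$ for $x>1$, integrates them to get upper and lower algebraic bounds on $x^{2/(q-1)}\psi_0(x)$, and then studies the function $\phi_0(x)=x^{2/(q-1)}\psi_0(x)$ through the ODE it satisfies, identifying the limit by a case analysis: if the limit exists it is evaluated along a sequence with $x_j\phi_0'(x_j)\to0$ (via a calculus lemma quoted from \cite{IL13}), and if it does not exist an oscillation argument on sequences of local extrema yields a contradiction between $\liminf$ and $\limsup$. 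You instead exploit the substitution $\phi=\psi_0^{(1-q)/2}$, which turns the first-order energy identity into $\phi'(x)\to\frac{q-1}{2}\sqrt{2/(q+1)}$, so that a single elementary integration gives $\phi(x)/x\to c$ and hence the first limit, with no $\liminf/\limsup$ machinery and no external lemma; the price is the preliminary step $\psi_0(x)\to0$ as $x\to\infty$, which you handle correctly (monotone limit $L$, and $L>0$ would force $\psi_0'\to-\sqrt{F(L)}<0$, contradicting positivity), whereas in the paper decay and its precise rate come out of the integrated bounds simultaneously. Your computation of the constant checks out ($c^{-2/(q-1)}$ equals the value in \eqref{limits}), and your derivation of the second limit from the first is essentially the same as the paper's concluding remark. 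One cosmetic caveat: your argument, like the paper's, tacitly uses that $\psi_0$ stays strictly positive on all of $\real$ (for $q>1$ this follows from uniqueness for the ODE at the degenerate zero, or from the lower algebraic bound), which is worth a sentence but is not a substantive gap.
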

\begin{proof}
Taking into account the radial symmetry, we can work on the half-plane $x>0$. We deduce on the one hand that
\begin{equation}\label{interm12}
\psi_0'(x)=-\sqrt{\frac{2}{q+1}\psi_0^{q+1}(x)-\frac{2}{p+1}\psi_0^{p+1}(x)}>-\sqrt{\frac{2}{q+1}}\psi_0^{(q+1)/2}(x),
\end{equation}
and on the other hand, for $x>1$,
\begin{equation}\label{interm12bis}
\begin{split}
\psi_0'(x)&=-\sqrt{\frac{2}{q+1}\psi_0^{q+1}(x)\left[1-\frac{q+1}{p+1}\psi_0^{p-q}(x)\right]}\\
&<-\sqrt{\frac{2}{q+1}\left[1-\frac{q+1}{p+1}\psi_0(1)^{p-q}\right]}\psi_0^{(q+1)/2}(x).
\end{split}
\end{equation}
Gathering the estimates \eqref{interm12} and \eqref{interm12bis}, we deduce that there exist $C_1>C_2>0$ such that
$$
-C_1\psi_0^{(q+1)/2}(x)<\psi_0'(x)<-C_2\psi_0^{(q+1)/2}(x), \quad x\in(1,\infty).
$$
Multiplying the previous inequalities by $\psi_0^{-(q+1)/2}(x)$ and integrating on $(1,x)$ leads to the bounds
\begin{equation*}
[\overline{C}_1(x-1)+\psi_0(1)^{(1-q)/2}]^{-2/(q-1)}<\psi_0(x)<[\overline{C}_2(x-1)+\psi_0(1)^{(1-q)/2}]^{-2/(q-1)}, \quad x>1,
\end{equation*}
with $\overline{C}_i=(q-1)C_i/2$, $i=1,2$, which in particular give that
\begin{equation}\label{bounds}
0<\overline{C}_1^{-2/(q-1)}<\liminf\limits_{x\to\infty}x^{2/(q-1)}\psi_0(x)\leq\limsup\limits_{x\to\infty}x^{2/(q-1)}\psi_0(x)<\overline{C}_2^{-2/(q-1)}.
\end{equation}
Set now $\phi_0(x):=x^{2/(q-1)}\psi_0(x)$. Straightforward calculations lead to the differential equation solved by $\phi_0$, that is,
\begin{equation}\label{interm21}
x\phi_0'(x)-\frac{2}{q-1}\phi_0(x)=-\phi_0^{(q+1)/2}(x)\sqrt{\frac{2}{q+1}-\frac{2}{p+1}x^{-2(p-q)/(q-1)}\phi_0^{p-q}(x)}.
\end{equation}
We next have two possibilities:

$\bullet$ either there exists $\lim\limits_{x\to\infty}\phi_0(x)=L\in(0,\infty)$, and in this case, an application of a standard calculus fact (see \cite[Lemma 2.9]{IL13} for a precise statement and proof) gives that there is a sequence $(x_j)_{j\geq1}$ such that $x_j\to\infty$ and $x_j\phi_0'(x_j)\to0$ as $j\to\infty$. Since $\phi_0(x_j)\to L$ as $j\to\infty$, we obtain by evaluating \eqref{interm21} at $x=x_j$ and letting $j\to\infty$ that
$$
-\frac{2}{q-1}L=-L^{(q+1)/2}\sqrt{\frac{2}{q+1}}, \quad {\rm that \ is}, \quad L=\left[\frac{2}{q-1}\sqrt{\frac{q+1}{2}}\right]^{2/(q-1)},
$$
as claimed.

$\bullet$ or the limit of $\phi_0(x)$ as $x\to\infty$ does not exist. Since $\phi_0$ is bounded according to \eqref{bounds}, it follows that it oscillates infinitely many times between two extremal values and thus there are sequences $(x^m_j)_{j\geq1}$ and $(x^M_j)_{j\geq1}$ of local minima, respectively local maxima for $\phi_0$, such that $x^m_j\to\infty$, $x^M_{j}\to\infty$ as $j\to\infty$ and that
$$
\lim\limits_{j\to\infty}\phi_0(x^m_j)=\liminf\limits_{x\to\infty}\phi_0(x), \quad \lim\limits_{j\to\infty}\phi_0(x^M_j)=\limsup\limits_{x\to\infty}\phi_0(x).
$$
By evaluating \eqref{interm21} at $x=x^m_j$, respectively $x=x^M_j$, and letting $j\to\infty$, we readily obtain that
$$
\liminf\limits_{x\to\infty}\phi_0(x)=\limsup\limits_{x\to\infty}\phi_0(x)=\left[\frac{2}{q-1}\sqrt{\frac{q+1}{2}}\right]^{2/(q-1)},
$$
which is a contradiction with the non-existence of the limit, showing that this case is not possible. Thus, the proof of the first limit in \eqref{limits} is complete. For the second limit in \eqref{limits}, it is enough to replace $\psi_0'(x)$ by the right hand side in \eqref{interm12}, obtaining thus an expression depending only on $\psi_0(x)$, and then employ the first limit in \eqref{limits} to get the result. We leave the easy details to the reader.
\end{proof}

\subsection{The stationary solution as a separatrix}\label{subsec.stat.upper}

Similarly as we did in Section \ref{subsec.stat.equal}, but technically more involved as we work with non-explicit solutions, we show next that the stationary solution constructed in the previous section plays the role of a separatrix for the qualitative properties of solutions to the Cauchy problem \eqref{eq1}-\eqref{ic}. This is made precise in the following statement.
\begin{theorem}\label{th.separ.upper}
(a) Let $u_0\in L^{\infty}(\real)\cap C(\real)$ be an initial condition such that $\|u_0\|_{\infty}<1$. Then the solution $u$ to the Cauchy problem \eqref{eq1}-\eqref{ic} decays as $t\to\infty$ and, more precisely, there exists $C>0$ such that
\begin{equation}\label{decay.upper}
\|u(t)\|_{\infty}\leq Ct^{-1/(q-1)}, \quad t>0.
\end{equation}

(b) Let $u_0\in L^{\infty}(\real)\cap C(\real)$ such that
\begin{equation}\label{inf.upper.upper}
\inf\limits_{x\in\real}\frac{u_0(x)}{\psi_0(x)}=\kappa_0>1.
\end{equation}
Then the solution $u$ to the Cauchy problem \eqref{eq1}-\eqref{ic} with initial condition $u_0$ blows up in finite time.

(c) Let $u_0\in L^{\infty}(\real)\cap C(\real)$ such that
\begin{equation}\label{sup.lower.upper}
\sup\limits_{x\in\real}\frac{u_0(x)}{\psi_0(x)}=\kappa^0<1.
\end{equation}
Then the solution $u$ to the Cauchy problem \eqref{eq1}-\eqref{ic} with initial condition $u_0$ decays to zero as $t\to\infty$ as in \eqref{decay.upper}.
\end{theorem}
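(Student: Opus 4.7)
Part (a) should be immediate from comparison with the spatially homogeneous solution $h(t)$ of $h'=h^p-h^q$ starting at $h(0)=\|u_0\|_{\infty}<1$: by Section \ref{subsec.expl.upper}, $h$ is decreasing, the comparison principle yields $u(x,t)\leq h(t)$, and the explicit bound \eqref{decay.rate.upper} gives $\|u(t)\|_{\infty}\leq Ct^{-1/(q-1)}$.

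For part (b), my plan is to adapt the self-similar subsolution construction of Theorem \ref{th.separ.equal}, namely $G(x,t)=\tau^{\delta}\psi_0(\tau^{\gamma}x)$ with $\tau=t+T$ and $\zeta=\tau^{\gamma}x$, for parameters $\delta,\gamma,T>0$ to be fixed. Using the stationary equation $\psi_0''=\psi_0^q-\psi_0^p$ a direct calculation gives
\begin{equation*}
G_t-G_{xx}+G^q-G^p = \tau^{\delta-1}[\delta\psi_0(\zeta)+\gamma\zeta\psi_0'(\zeta)] + (\tau^{q\delta}-\tau^{\delta+2\gamma})\psi_0^q(\zeta) + (\tau^{\delta+2\gamma}-\tau^{p\delta})\psi_0^p(\zeta).
\end{equation*}
I would impose $(q-1)\delta<2\gamma<(p-1)\delta$ (feasible since $p>q$), so that for $\tau>1$ both the $\psi_0^q$ and the $\psi_0^p$ coefficients are strictly negative. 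The only potentially positive contribution is the first bracket $\Phi(\zeta):=\delta\psi_0+\gamma\zeta\psi_0'$; by the second limit in \eqref{limits} one has $\Phi(\zeta)/\psi_0(\zeta)\to \delta-2\gamma/(q-1)<0$ as $|\zeta|\to\infty$, so $\Phi$ is strictly negative in the tails. On the remaining compact core, where $\psi_0$ is bounded from below, the growing coefficients of $\psi_0^q,\psi_0^p$ absorb $\Phi$ provided $T$ is taken large enough. I fix $T=\kappa_0^{1/\delta}$, so that $T^{\delta}=\kappa_0$, which by \eqref{inf.upper.upper} gives $G(\cdot,0)\leq u_0(T^{\gamma}\cdot)$; choosing $\delta$ small forces $T$ large enough to meet the subsolution constraint as well. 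Comparison then yields $u(\zeta,t)\geq \tau^{\delta}\psi_0(\zeta)$, and an energy computation modeled on Theorem \ref{th.separ.equal}(a), using $2\gamma<(p-1)\delta$, shows that the dominant term of $E(G(t))$ is the negative contribution of order $\tau^{(p+1)\delta-\gamma}$, so $E(G(t))\to-\infty$. Applying \cite[Theorem 17.6]{QS} at a time $t_0$ with $E(G(t_0))<0$ produces blow-up of the solution with initial datum $G(t_0)$, hence of $u$ by comparison.

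Part (c) is the mirror construction with supersolution $H(x,t)=\tau^{-\delta}\psi_0(\tau^{-\gamma}x)$, the same range $(q-1)\delta<2\gamma<(p-1)\delta$, and $T=(\kappa^0)^{-1/\delta}$, so that $T^{-\delta}=\kappa^0$ and $H(x,0)=\kappa^0\psi_0(T^{-\gamma}x)\geq\kappa^0\psi_0(x)\geq u_0(x)$ by monotonicity of $\psi_0$ in $|x|$ together with \eqref{sup.lower.upper}. The analogous algebraic identity produces positive coefficients for $\psi_0^q,\psi_0^p$ and a bracket $-\delta\psi_0-\gamma\zeta\psi_0'$ whose tail limit $(-\delta+2\gamma/(q-1))\psi_0$ is positive; the core negativity of this bracket is absorbed by the positive nonlinear terms for $T$ large. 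Comparison yields $u\leq H$, hence $\|u(t)\|_{\infty}\leq \tau^{-\delta}\|\psi_0\|_{\infty}\to 0$; choosing $t_0$ with $\|u(t_0)\|_{\infty}<1$ then lets me invoke part (a) starting from time $t_0$ and recover the sharp decay \eqref{decay.upper}.

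The main obstacle, relative to Theorem \ref{th.separ.equal}, is precisely that the bracket $\delta\psi_0+\gamma\zeta\psi_0'$ does not reduce to a single power of $\psi_0$ and so does not combine algebraically with the other terms the way $(t+T)^{\delta-1}g_0+(t+T)^{\delta}g_0$ did in the $q=1$ case; its sign in the tails must be extracted from the exact asymptotic $\zeta\psi_0'/\psi_0\to-2/(q-1)$ proved in Proposition \ref{prop.stat}, and this forces the asymmetric lower bound $2\gamma>(q-1)\delta$ in the parameter choice, in contrast with the simpler $\gamma=\delta(p-1)/4$ available when $q=1$.
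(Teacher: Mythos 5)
Your proposal is correct and follows essentially the same route as the paper: comparison with the ODE solution $h(t)$ for part (a), the rescaled subsolution $(T+t)^{\delta}\psi_0((T+t)^{\gamma}x)$ with $(q-1)\delta<2\gamma<(p-1)\delta$, $T=\kappa_0^{1/\delta}$, the tail/core splitting via Proposition \ref{prop.stat}, compensation of the bracket by the negative $\psi_0^p$-term on the core, and the (suitably modified) energy functional for part (b), and the mirror supersolution for part (c). The only detail worth pinning down is that the paper first fixes the ratio $\delta/\gamma$ (as in \eqref{interm14bis}) so that $R_0$ and $L_0$ depend only on $p,q$ before letting $\delta$ be small, which removes the apparent circularity in ``choose $\delta$ small / $T$ large'' that your sketch leaves implicit.
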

\begin{proof}
(a) This follows directly by comparison with a solution $h(t)$ to \eqref{interm9} with $\|u_0\|_{\infty}<h(0)<1$. The comparison principle and the estimate \eqref{decay.rate.upper} lead to the conclusion.

\medskip

(b) Following similar ideas as in the proof of Theorem \ref{th.separ.equal}, we construct a subsolution to the Cauchy problem \eqref{eq1}-\eqref{ic} in the form
$$
\Psi(x,t)=(T+t)^{\delta}\psi_0((T+t)^{\gamma}x), \quad (x,t)\in[0,\infty)\times[0,\infty),
$$
with $T>1$, $\delta>0$ and $\gamma>0$ to be determined. Setting $\zeta=(T+t)^{\gamma}x$, we obtain by direct calculations
\begin{equation}\label{interm13}
\begin{split}
\Psi_t(x,t)&-\Psi_{xx}(x,t)+\Psi^q(x,t)-\Psi^p(x,t)=\delta(T+t)^{\delta-1}\psi_0(\zeta)\\
&+\gamma(T+t)^{\delta-1}\zeta\psi_0'(\zeta)-(t+T)^{\delta+2\gamma}\psi_0^q(\zeta)+(t+T)^{\delta+2\gamma}\psi_0^{p}(\zeta)\\
&+(t+T)^{q\delta}\Psi_0^q(\zeta)-(T+t)^{p\delta}\psi_0^p(\zeta)\\
&=(T+t)^{\delta-1}\left[\delta-\gamma\zeta\sqrt{\frac{2}{q+1}\psi_0^{q-1}(\zeta)-\frac{2}{p+1}\psi_0^{p-1}(\zeta)}\right]\psi_0(\zeta)\\
&+(T+t)^{\delta+2\gamma}\left[1-(t+T)^{\delta(p-1)-2\gamma}\right]\psi_0^{p}(\zeta)\\
&+(T+t)^{q\delta}\left[1-(t+T)^{2\gamma-\delta(q-1)}\right]\psi_0^q(\zeta)=T_1+T_2+T_3,
\end{split}
\end{equation}
where in the formula for the term $T_1$ we have employed the equation \eqref{interm10} satisfied by $\psi_0'(\zeta)$ for $\zeta\in(0,\infty)$. Since we want that our function $\Psi$ remains a subsolution for any $t>0$, the expressions of the terms $T_2$ and $T_3$ in \eqref{interm13} imply that a necessary condition is to fix
\begin{equation}\label{interm14}
\frac{\delta(q-1)}{2}<\gamma<\frac{\delta(p-1)}{2}.
\end{equation}
We next fix $T$ as follows:
\begin{equation}\label{interm15}
T=\kappa_0^{1/\delta}>1,
\end{equation}
remaining thus to choose $\delta$ as the only (still) free parameter. We infer from \eqref{interm14} and \eqref{interm15} that, on the one hand,
$$
(t+T)^{2\gamma-\delta(q-1)}\geq T^{2\gamma-\delta(q-1)}>1, \quad t\geq0
$$
and, on the other hand,
$$
(t+T)^{\delta(p-1)-2\gamma}\geq T^{\delta(p-1)-2\gamma}>1, \quad t\geq0,
$$
whence $T_2<0$ and $T_3<0$ in \eqref{interm13}, for any $t\geq0$. We are left with estimating the term $T_1$ in \eqref{interm13}. We derive from \eqref{limits} that
$$
\zeta\sqrt{\frac{2}{q+1}\psi_0^{q-1}(\zeta)-\frac{2}{p+1}\psi_0^{p-1}(\zeta)}=-\frac{\zeta\psi_0'(\zeta)}{\psi_0(\zeta)}\to\frac{2}{q-1},
$$
as $\zeta\to\infty$. Since $\delta/\gamma<2/(q-1)$ by \eqref{interm14}, we deduce that there exists $R_0>0$ such that
\begin{equation}\label{interm16}
\frac{\delta}{\gamma}<\zeta\sqrt{\frac{2}{q+1}\psi_0^{q-1}(\zeta)-\frac{2}{p+1}\psi_0^{p-1}(\zeta)}, \quad |\zeta|>R_0,
\end{equation}
which in particular implies that $T_1<0$ if $|\zeta|>R_0$. Set next
\begin{equation}\label{interm20}
L_0:=\inf\limits_{\zeta\in[-R_0,R_0]}\psi_0(\zeta)>0,
\end{equation}
and notice that $R_0$ and $L_0$ depend only on $p$ and $q$, if we fix, for example,
\begin{equation}\label{interm14bis}
\frac{\delta}{\gamma}=\frac{1}{p-1}+\frac{1}{q-1},
\end{equation}
a choice that satisfies \eqref{interm14}. We are left with estimating $T_1$ in \eqref{interm13} in the closed interval $[-R_0,R_0]$, and to this end we change a bit the strategy: instead of estimating $T_1$ alone, we recall that $\zeta\psi_0'(\zeta)\leq0$ for any $\zeta\in\real$ and we compensate $T_1$ with the negativity of $T_2$ in this interval. More precisely, discarding the already negative contribution of $\gamma(T+t)^{\delta-1}\zeta\psi_0'(\zeta)$, we want to choose $\delta>0$ such that, for any $\zeta\in[-R_0,R_0]$ and $t\geq0$, we have
$$
\delta(T+t)^{\delta-1}-(T+t)^{\delta+2\gamma}\left[(T+t)^{\delta(p-1)-2\gamma}-1\right]\psi_0^{p-1}(\zeta)<0,
$$
for which a sufficient condition is to pick $\delta$ such that
\begin{equation}\label{interm17}
0<\delta<\left(\kappa_0^{p-1-2\gamma/\delta}-1\right)L_0^{p-1}=\left(\kappa_0^{(p-1)(p-q)/(p+q-2)}-1\right)L_0^{p-1},
\end{equation}
taking into account \eqref{interm14bis} and that $T>1$ and $\kappa_0>1$. It thus follows that $\Psi$ is a subsolution to Eq. \eqref{eq1} and also satisfies
$$
\Psi(x,0)=\kappa_0\psi_0(T^{\gamma}x)\leq u_0(T^{\gamma}x),
$$
hence the comparison principle entails that
$$
\Psi(x,t)=(T+t)^{\delta}\psi_0(\zeta)\leq u(\zeta,t), \quad (\zeta,t)\in\real\times(0,\infty).
$$
In order to prove the finite time blow-up, we adapt the argument at the end of the proof of Theorem \ref{th.separ.equal} with a slightly changed energy functional in order to cope with the term $u^q$ instead of $u$, that is,
$$
E(u(t))=\frac{1}{2}\int_{\real}|u_x|^2(x,t)\,dx+\frac{1}{q+1}\int_{\real}u^{q+1}(x,t)\,dx-\frac{1}{p+1}\int_{\real}u^{p+1}(x,t)\,dx.
$$
The proof of \cite[Theorem 17.6, (ii)]{QS} straightforwardly adapts to this energy to show that, if for a general solution $u$ to Eq. \eqref{eq1} in $\real\times(0,\infty)$ we have $E(u(t))<0$ for some $t\geq0$, then the solution $u$ blows up in finite time. In our case, similarly as at the end of the proof of Theorem \ref{th.separ.equal}, we compute
\begin{equation*}
\begin{split}
E(\Psi(t))&=\frac{1}{2}(T+t)^{2\delta+\gamma}\int_{\real}|\psi_0'(y)|^2\,dy+\frac{1}{q+1}(T+t)^{\delta(q+1)-\gamma}\int_{\real}\psi_0^{q+1}(y)\,dy\\
&-\frac{1}{p+1}(T+t)^{\delta(p+1)-\gamma}\int_{\real}\psi_0^{p+1}(y)\,dy<0,
\end{split}
\end{equation*}
provided $t>t_0>0$ is sufficiently large, since the fact that $p>q$ and \eqref{interm14} ensure that
$$
(p+1)\delta-\gamma>(q+1)\delta-\gamma, \quad (p+1)\delta-\gamma>2\delta+\gamma.
$$
Thus, the solution to Eq. \eqref{eq1} with initial condition $\Psi(t_0)$ such that $E(\Psi(t_0))<0$ blows up in finite time. We conclude then from the comparison principle that $u$ also blows up in finite time.

\medskip

(c) We want to construct a supersolution to the Cauchy problem \eqref{eq1}-\eqref{ic} decaying in time, and we work again ``in the mirror" with respect to the construction performed in part (b), so that we will only give a sketch of it below. We consider
$$
\Phi(x,t)=(T+t)^{-\delta}\psi_0(x(T+t)^{-\gamma}), \quad \zeta=(T+t)^{-\gamma}x,
$$
and we want to choose the parameters and exponents $T$, $\delta$, $\gamma$ such that $\Phi$ is a supersolution. Direct calculations similar to the ones performed in part (b) lead to the following expression
\begin{equation}\label{interm13bis}
\begin{split}
\Phi_t(x,t)&-\Phi_{xx}(x,t)+\Phi^q(x,t)-\Phi^p(x,t)=-\gamma(T+t)^{-\delta-1}\left[\frac{\delta}{\gamma}+\frac{\zeta\psi_0'(\zeta)}{\psi_0(\zeta)}\right]\psi_0(\zeta)\\
&+(T+t)^{-\delta-2\gamma}\left[1-(t+T)^{-\delta(p-1)+2\gamma}\right]\psi_0^{p}(\zeta)\\
&+(T+t)^{-q\delta}\left[1-(t+T)^{-2\gamma+\delta(q-1)}\right]\psi_0^q(\zeta)=T_1+T_2+T_3.
\end{split}
\end{equation}
Since we want a supersolution for any $t\geq0$, we are forced to choose $\delta$, $\gamma$ such that the powers of $T+t$ in the second and third line of \eqref{interm13bis} are negative, thus \eqref{interm14bis} remains in force. Moreover, with the experience gained in part (b), we fix
\begin{equation}\label{interm15bis}
T=(\kappa^0)^{-1/\delta}>1.
\end{equation}
With the choices in \eqref{interm14bis} and \eqref{interm15bis}, we observe that, on the one hand,
$$
(t+T)^{-\delta(p-1)+2\gamma}\leq T^{-\delta(p-1)+2\gamma}<1, \quad t\geq0,
$$
and on the other hand
$$
(t+T)^{-2\gamma+\delta(q-1)}\leq T^{-2\gamma+\delta(q-1)}<1, \quad t\geq0.
$$
This ensures that $T_2>0$ and $T_3>0$ for any $t\geq0$. Moreover, for $t=0$, we have the comparison between initial conditions:
\begin{equation}\label{interm19}
\Phi(x,0)=T^{-\delta}\psi_0(T^{-\gamma}x)=\kappa^0\psi_0(T^{-\gamma}x)\geq u_0(T^{-\gamma}x).
\end{equation}
It only remains to choose $\delta>0$ such that $T_1>0$ too. Noticing that \eqref{interm14bis} and \eqref{limits} imply
$$
\frac{\delta}{\gamma}<\frac{2}{q-1}=-\lim\limits_{\zeta\to\infty}\frac{\zeta\psi_0'(\zeta)}{\psi_0(\zeta)},
$$
we infer that there is $R_0>0$ such that $T_1>0$ for $\zeta\in\real$ such that $|\zeta|>R_0$. For the compact interval $[-R_0,R_0]$, the proof follows by compensating $T_1$ by $T_2$ in order to choose $\delta$, following exactly the same lines as in the final part of the proof of part (b), this compensation, together with the choices in \eqref{interm14bis} and \eqref{interm15bis} leading to the condition
$$
\delta<\min\left\{\frac{1}{2}\left(\frac{1}{p-1}+\frac{1}{q-1}\right),\left[1-(\kappa^0)^{(p-1)(p-q)/(p+q-2)}\right]L_0^{p-1}\right\},
$$
with $L_0$ defined as in \eqref{interm20}, noticing that the previous election also entails that $\gamma\in(0,1/2)$. The comparison principle and \eqref{interm19} then entail that
$$
\Phi(x,t)=(T+t)^{-\delta}\psi_0(\zeta)\geq u(\zeta,t), \quad (\zeta,t)\in\real\times(0,\infty),
$$
and in particular
$$
\|u(t)\|_{\infty}\leq (T+t)^{-\delta}\|\psi_0\|_{\infty}<1,
$$
provided $t\geq t_0$ sufficiently large. This fact together with part (a) lead to the conclusion.
\end{proof}

\section*{Discussion and extensions}

The previous analysis raises a few questions related to a finer description of the dynamics of solutions to Eq. \eqref{eq1}, that we comment in the next lines.

\medskip

$\bullet$ \textbf{Blow-up rates and patterns.} Due to the fact that, for large values of $u$ (close to a blow-up point), $u^q$ is negligible compared to $u^p$, it is strongly expected that the blow-up behavior of solutions to Eq. \eqref{eq1} presenting finite time blow-up is totally similar to the one of the usual reaction-diffusion equation $u_t=u_{xx}+u^p$. In fact, if $T\in(0,\infty)$ is the blow-up time of the solution to the Cauchy problem \eqref{eq1}-\eqref{ic} with an initial condition $u_0$ satisfying either Theorem \ref{th.separ.equal} (a) or Theorem \ref{th.separ.upper} (b), then it cannot lie completely below the solution given by either \eqref{sol.time} (if $q=1$) or to \eqref{interm9} (if $q\in(1,p)$) having the same blow-up time $T$. We then deduce the lower blow-up rate from \eqref{blowup.rate.upper}, that is,
$$
\|u_0\|_{\infty}\geq C(T-t)^{-1/(p-1)}, \quad t\in(0,T),
$$
for some $C>0$. As for the upper rate and other fine properties of blow-up, the theory in \cite{QS} should apply without many changes. In particular, for $q=1$ the previous rate follows as a particular case of the results in the recent paper \cite{Zhang24}.

$\bullet$ \textbf{Large time behavior for solutions with decay as $t\to\infty$.} As we have seen, for $q=1$ we have obtained the large time behavior by an asymptotic simplification. The same must hold true for $q\in(1,p)$ since for $u$ very small, the reaction $u^p$ is negligible with respect to the absorption $u^q$, so that the asymptotic simplification is practically obvious at a formal level. We do not enter this discussion in detail, since the large time behavior of solutions to the pure absorption-diffusion equation (neglecting $u^p$) is not straightforward, but given by either a Gaussian, as a further asymptotic simplification, if $q>3$, or some very singular solutions constructed in \cite{BPT86} if $1<q<3$.

$\bullet$ \textbf{Extinction behavior for $q\in(0,1)$}. As established in \cite{HBS14}, for $q\in(0,1)$, initial conditions $u_0$ lying below the stationary solution lead to finite time extinction, that is, there exists $T_e\in(0,\infty)$ such that $u(t)\not\equiv0$ for $t\in(0,T_e)$, but $u(T_e)\equiv0$. Establishing the extinction rate (that is not studied in \cite{HBS14}) follows rather straightforwardly, since one can construct solutions depending only on time $h(t)$, solving the differential equation
$$
h'(t)=h^p(t)-h^q(t), \quad h(0)<1,
$$
and, by estimates in the same style as the ones leading to \eqref{blowup.rate.upper}, find that
$$
[(1-q)(1-h(0)^{p-q})]^{1/(1-q)}(T_e-t)^{1/(1-q)}<h(t)<(1-q)^{1/(1-q)}(T_e-t)^{1/(1-q)}.
$$
Moreover, a solution to Eq. \eqref{eq1} is a supersolution to the pure absorption-diffusion equation obtained by neglecting $u^p$, and about which it is known that the extinction rate of general solutions is
$$
\|u(t)\|_{\infty}\sim C(T_e-t)^{1/(1-q)}, \quad {\rm as} \ t\to T_e,
$$
so that the same rate will be in force for Eq. \eqref{eq1} with $p>1$ and $q\in(0,1)$. However, a finer analysis of the extinction phenomenon is rather complex, as it should be at least similar to (if not technically more involved than) the one for the absorption-diffusion equation
$$
u_t=u_{xx}-u^q, \quad 0<q<1,
$$
studied in detail in classical but quite complicated works such as \cite{GHV, HV92}. We leave this discussion here.

\bigskip

\noindent \textbf{Acknowledgements} R. G. I. and A. S. are partially supported by the Project PID2020-115273GB-I00 and by the Grant RED2022-134301-T funded by MCIN/AEI/10.13039/ \\ 501100011033 (Spain).

\bibliographystyle{plain}

\end{document}